\newcommand{\dig}[1]{\mbox{\texttt{#1}}}
\newcommand{\rl}{\mathsf{RunL}}
\newcommand{\lsa}{\mathsf{LSA}}
\newcommand{\lsb}{\mathsf{LSB}}
\renewenvironment*{example}[0]{\refstepcounter{example}\smallskip\noindent\textbf{Example~\theexample.\xspace}}{}
\renewenvironment*{conjecture}[0]{\refstepcounter{conjecture}\smallskip\noindent\textbf{Conjecture~\theconjecture.\xspace}}{}
\DeclareFontFamily{U}{mathb}{\hyphenchar\font45}
\DeclareFontShape{U}{mathb}{m}{n}{
      <5> <6> <7> <8> <9> <10> gen * mathb
      <10.95> mathb10 <12> <14.4> <17.28> <20.74> <24.88> matha12
      }{}
\DeclareSymbolFont{mathb}{U}{mathb}{m}{n}
\DeclareMathSymbol{\downuparrows}{3}{mathb}{"D7}
\DeclareMathSymbol{\bft}{3}{mathb}{"FD}
\begin{document}

\title{The Look-and-Say The Biggest Sequence Eventually Cycles}

\author{%
    \'Eric Brier\inst{2} \and Rémi Géraud-Stewart\inst{1} \and David Naccache\inst{1}\and Alessandro Pacco\inst{1} \and Emanuele Troiani\inst{1}
} 

\institute{
    ÉNS (DI), Information Security Group,
CNRS, PSL Research University, 75005, Paris, France.\\
45 rue d'Ulm, 75230, Paris \textsc{cedex} 05, France\\
\email{\url{remi.geraud@ens.fr}},~~~~  
\email{\url{given\_name.family\_name@ens.fr}}
	\and
	Ingenico Laboratories, 75015, Paris, France.\\
	\email{\url{eric.brier@ingenico.com}}\\
}
\maketitle

\begin{abstract}
In this paper we consider a variant of Conway's sequence (\textsf{OEIS A005150, A006715}) defined as follows: the next term in the sequence is obtained by considering contiguous runs of digits, and rewriting them as $ab$ where $b$ is the digit and $a$ is the \emph{maximum} of $b$ and the run's length. We dub this the \enquote{look-and-say the biggest} ($\lsb$) sequence.

Conway's sequence is very similar ($b$ is just the run's length). For any starting value except \dig{22}, Conway's sequence grows exponentially: the ration of lengths converges to a known constant $\lambda$. We show that $\lsb$ does not: for every starting value, $\lsb$ eventually reaches a cycle. Furthermore, all cycles have a period of at most 9.
\end{abstract}

\section{Introduction}

The look-and-say (\textsf{LS}) sequence \cite{conway2012book}, also known as the \emph{Morris} or the \emph{Conway sequence} \cite{conway1987weird,hilgemeier1996one,ekhad1997proof} is a recreational integer sequence having very intriguing properties. A \textsf{LS} sequence is obtained iteratively by reading off the digits of the current value, and counting the number of digits in groups of the identical digit.\smallskip

Following Conway's work, numerous variants of \textsf{LS} sequences were proposed and studied, e.g., Pea Pattern sequences \cite{kowacs2017studies,pea}, Sloane's sequences\cite{slo} or Oldenburger--Kolakoski sequences \cite{kola,kola2,lagarias1992number} (\textsf{OEIS A000002}).\smallskip

In this paper we consider a variant of $\textsf{LS}$ sequence: the next term in the sequence is obtained by considering contiguous runs of digits, and rewriting them as $ab$ where $b$ is the digit and $a$ is the \emph{maximum} of $b$ and the run's length (this is formally defined further below). We dub this the \enquote{look-and-say the biggest} ($\lsb$) sequence. An $\lsb$ sequence is therefore very similar to a \textsf{LS} sequence; however it behaves very differently.

Indeed, Conway showed \cite{conway1987weird} that for all starting values (except \dig{22}), \textsf{LS} sequences produce ever-growing numbers, with a growth ratio converging to some constant $\lambda$ (which is the root of an explicit degree-$71$ polynomial). As we will show, however, for \emph{every} starting value, $\lsb$ sequences eventually reach a cycle. When they do, this cycle repeats itself in at most 9 iterations (\Cref{thm:sam}).

\section{Notations and definitions}
In this paper we assume that numbers are written in base 10. Any integer $T$ can thus be written $T = t_1t_2 \cdots t_k$ with $t_1, \dotsc, t_k \in \{{\dig{0}}, {\dig{1}}, \dotsc, {\dig{9}}\}$. To avoid any ambiguity, $ab$ will denote the \emph{concatenation} of the numbers $a$ and $b$; accordingly $a^b$ indicates that a digit $a$ is repeated $b$ times. If we want to emphasise concatenation we use $a\|b$ instead of $ab$.

\begin{definition}[Run-length representation]
Let $T\in \mathbb{N}^*$, we can write 
\begin{equation*}
    T = \underbrace{a_1\ldots a_1}_{n_1}\underbrace{a_2\ldots a_2}_{n_2}\ldots\underbrace{a_k\ldots a_k}_{n_k}
\end{equation*}
with $a_1\neq a_2, a_2\neq a_3, \dotsc, a_{k-1}\neq a_{k}$. The \emph{run-length representation} of $T$ is the sequence $\rl(T)= a_1^{n_1}a_2^{n_2} \cdots a_k^{n_k}$. Conversely, any finite sequence of couples $(a_i, b_i)_i$ where $a \in \mathbb N^*$ and ${\dig{0}}\leq b_i\leq {\dig{9}}$ is such that $b_{i-1}\neq b_i\neq b_{i+1}$, corresponds to an integer with run-length representation $(a_i^{b_i})_i$. 
\end{definition}
Note that the run-length representation of an integer is unique. 

\begin{definition}[Pieces]
If $N = (a_i^{b_i})$ is a run-length encoded integer, we call each $a_i^{b_i}$ a \emph{piece} of $N$.
\end{definition}

\begin{definition}[Look-and-say-again sequence]
Let $T_0$ be a decimal digit, and for each $T_n$ define
\begin{equation*}
    T_{n+1} = n_1n_1a_1a_1n_2n_2a_2a_2\cdots n_kn_ka_ka_k.
\end{equation*}
where $(a_i^{n_i})_i = \rl(T_n)$. We call the sequence $(T_k)_{k \in \mathbb N}$ the \emph{look-and-say-again sequence} of seed $T_0$, and denote it by $\lsa(T_0)$.
\end{definition}

\begin{definition}[Look-and-say-the-biggest sequence]
Let $S_0\in\mathbb{N}^*$, and for every integer $S_n\in\mathbb{N}^*$, $(a_i^{n_i})_i = \rl(S_n)$, let
\begin{equation*}
    S_{n+1} = \max(n_1,a_1)a_1\max(n_2,a_2)a_2 \cdots \max(n_k,a_k)a_k.
\end{equation*}
We call $(S_n)_{n \in \mathbb N}$ the \emph{look-and-say-the-biggest} sequence of seed $S_0$, and denote it by $\lsb(S_0)$.
\end{definition}

\begin{example}
$\lsb(1) = 
\small{    
    \dig{1}\to\dig{11}\to\dig{21}\to\dig{2211}\to\dig{2221}\to\dig{3211}\to\dig{332211}\to\dig{332221}\leftrightarrows\dig{333211}}$
\end{example}

\section{The look-and-say-the-biggest sequence}

\begin{theorem}[Eventual periodicity]\label{thm:sam}
Let $s$ be an integer, then the sequence $\lsb(s)$ is eventually periodic, of period $\tau \leq 9$.
\end{theorem}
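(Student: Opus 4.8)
The plan is to confine the orbit to a finite state space and then invoke the pigeonhole principle: once the number of digits of $S_n$ stays bounded and every digit lies in a fixed finite alphabet, only finitely many values are possible, so the sequence must eventually repeat and $\lsb(s)$ is eventually periodic. Bounding the period by $9$ is then a separate, finite structural matter. Accordingly the argument splits into three reductions: (i) run lengths become bounded; (ii) the largest digit stabilises; (iii) the total length stays bounded. Step (iii) is the heart of the proof.

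For (i), write $\rl(S_n)=(a_i^{n_i})_i$, so that whenever all $n_i\le 9$ the successor $S_{n+1}$ is the concatenation of the two-symbol blocks $\max(n_i,a_i)\,a_i$. Under this hypothesis every run of $S_{n+1}$ has length at most $3$: a maximal run of a common value $v$ must have the shape $c_id_ic_{i+1}$ with $c_i=\max(n_i,a_i)=a_i=v$ and $\max(n_{i+1},a_{i+1})=v$, and it can neither begin earlier nor extend further because consecutive pieces carry distinct digits $a_{i-1}\neq a_i\neq a_{i+1}$. Since $3\le 9$, the bound $n_i\le 3$ is preserved from then on. The only subtlety is the seed: a run of length $n\ge 10$ emits the block $(\text{decimal of }n)\,a_i$, whose own runs have length $O(\log n)$, so run lengths shrink until they drop below $10$, after which the previous observation applies. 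For (ii), each digit $a_i$ reappears as the second symbol of its block while the first symbol is $\max(n_i,a_i)\ge a_i$; hence the largest digit is non-decreasing, and being bounded by $9$ it stabilises to some $m$. Once it is constant no block may output a digit larger than $m$, which forces $n_i\le m$, so with (i) we get $n_i\le\min(m,3)$.

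For the crux (iii) I would show, from the top digit downwards, that the multiplicity of each digit freezes. If $v\ge 4$ then $n_i\le 3<v$ makes $\max(n_i,a_i)=v$ equivalent to $a_i=v$, so every run of value $v$ maps to the \emph{isolated} pair $vv$ (its neighbours carry strictly smaller symbols), and the counts of $v$ and of $v$-runs are exactly preserved. An analogous isolation argument applies to $v=0$, which can only occur as a trailing second symbol, so $0$-runs have length $1$ and are permanent. This freezes the contribution of every digit outside $\{1,2,3\}$ and reduces the question to the sub-words over $\{1,2,3\}$ lying between these fixed markers; on them the map can never manufacture a digit $\ge 4$, so the alphabet is closed. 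Bounding the length of such a sub-word is the main obstacle, because here the genuinely expanding step---creation of a fresh leading digit $\max(n_i,a_i)>a_i$---really does occur; I expect to settle it by a finite case analysis of how runs of $1$, $2$ and $3$ are born and merged, or equivalently by exhibiting a weight on pieces whose total is eventually non-increasing. Granting this, the length is bounded, the state set is finite, and eventual periodicity follows.

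It remains to bound the period. Inside the periodic regime the length and all digit multiplicities are constant, and since the length obeys $\ell_{n+1}=2k_n$ (each of the $k_n$ runs emits two symbols) a periodic point has length $\ell$ and run count $k$ with $\ell=2k$, i.e. its runs average length $2$; moreover $\lsb$ restricts to a permutation of the finitely many states of a fixed shape. The frozen markers of (iii) cut each such state into independent gaps, and I would show the global period divides the least common multiple of the (small) periods of these gap-configurations. A finite enumeration of the admissible stable cores---made possible precisely by the length bound---then caps every period at $9$, the $2$-cycle $\dig{332221}\leftrightarrows\dig{333211}$ being a representative example.
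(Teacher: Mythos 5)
Your preliminary reductions are sound, and they essentially rediscover the paper's own set-up: your step (i) (one application of the map forces all run lengths down to $3$ once they are below $10$) is the paper's remark before \Cref{lem:periodweak} combined with \Cref{lem:fixptA}, and your step (iii)'s observation that a digit $v\ge 4$ whose runs have length $\le 3$ maps to an isolated fixed pair $vv$ is precisely the paper's ``adult'' analysis (\Cref{lem:ZtypeB,lem:fixpt,thm:typeB}). Likewise, cutting the integer into frozen markers and sub-words over $\{\dig{1},\dig{2},\dig{3}\}$ parallels the kid/adult decomposition used in the paper's final proof of \Cref{thm:sam}.

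However, the proposal has two genuine gaps, both located at the point you yourself call the heart of the proof. First, the length bound on the kid sub-words is never established: you write that you ``expect to settle it by a finite case analysis \dots or equivalently by exhibiting a weight on pieces whose total is eventually non-increasing,'' and then proceed ``granting this.'' That step is the actual mathematical content of the theorem, since it is exactly where the expanding transition $\max(n_i,a_i)>a_i$ occurs; the paper devotes \Cref{fig:smallnumbers}, \Cref{tab:fossils} and the fossil-prefix case analysis of \Cref{thm:smalltypeA} to it, showing that every kid word with runs of length $\le 3$ re-enters a cycle within $8$ iterations by tracking how non-fixed prefixes beginning with $\dig{3}$, $\dig{2}$ or $\dig{1}$ stabilise. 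Without this, nothing bounds the lengths and the pigeonhole argument never gets off the ground. Second, even granting finiteness of the state space, pigeonhole only yields eventual periodicity with period bounded by the (enormous) number of reachable states; the claimed bound $\tau\le 9$ is again deferred to ``a finite enumeration of the admissible stable cores'' that is not carried out, whereas in the paper the bound falls out of the same explicit case analysis (at most $2+2+4$ iterations to kill all non-fixed prefixes, plus the cycles of period $\le 2$ exhibited in \Cref{tab:fossils}). A smaller inaccuracy: your zeros are not passive markers. The piece $\dig{0}^1$ emits the block $\dig{10}$, injecting a fresh $\dig{1}$ into the word on its left at every iteration (compare the rows of \Cref{tab:fossils} seeded at $\dig{0}$, $\dig{00}$, $\dig{000}$), so the independent units must be taken as $\{\dig{1},\dig{2},\dig{3}\}$-words \emph{together with} their trailing zero, not as autonomous words over $\{\dig{1},\dig{2},\dig{3}\}$; this does not doom the approach, but it enlarges the case analysis you still owe.
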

To prove this result we introduce several useful definitions and lemmas.

\begin{definition}[Maxmaps]
Let $a_i^{n_i}$ be a piece, we define the \emph{partial maxmap}:
\begin{equation*}
    z: a_i^{n_i} \mapsto \max(a_i, n_i) a_i
\end{equation*}
which sends a piece to an integer. We extend this definition to work on integers: if $(a_i^{n_i})_i = \rl(N)$, we define the \emph{maxmap}:
\begin{equation*}
    Z: N \mapsto z(a_1^{n_1})z(a_2^{n_2}) \cdots z(a_k^{n_k})
\end{equation*}
which maps integers to integers. Finally, for every integer $m > 0$, we write $Z^m = Z \circ Z \circ \cdots \circ Z$, 
where $Z$ appears $m$ times in the composition.

\end{definition}
\begin{example} 
$Z(\dig{11193222}) = \dig{31993332}$.
\end{example}

\begin{remark}
With these notions in place, \Cref{thm:sam} can be reformulated as follows: for any integer $s$,
if $s$ does not contain more than 9 continuous identical digits, then $s \in \{Z^m(s) \mid 1 \leq m \leq 8\}$.
\end{remark}

\begin{remark}
Assume that $s$ contains more than $9$ consecutive identical digits $a$, but fewer than $100$; write this number $n$. Then the corresponding piece in the run-length representation is $a^n$, which will be mapped by $z$ to $na$ (as $n > a$ by assumption). But $n$ itself is made of two digits so that $na$ contains at most 3 consecutive identical digits. By iteratively applying $Z$ to a number having more than 9 consecutive digits, we eventually obtain a number with at most 9 consecutive digits.
\Cref{thm:sam} will therefore follow from this observation and the following result:
\end{remark}

\begin{lemma}\label{lem:periodweak}
Let $s \in \mathbb N$, $(a_i^{n_i})_i = \rl(s)$. If $n_i \leq 9$ for all $i$ then the sequence $Z^n(s)$ periodic, of period $\tau < 9$.
\end{lemma}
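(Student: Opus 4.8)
The plan is to show that $\lsb(s)$ eventually takes values in a finite set, so that eventual periodicity follows at once from the pigeonhole principle applied to the deterministic map $Z$, and then to pin down the period by a structural analysis of the cyclic configurations.

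First I would establish that the largest digit occurring in $S_n$ is non-decreasing in $n$. The point is that $z$ sends each piece $a_i^{n_i}$ to $\max(a_i,n_i)\,a_i$, so the digit $a_i$ itself always reappears in $Z(S_n)$, while every freshly created digit is a count $\max(a_i,n_i)$, hence at most $9$ under the standing hypothesis $n_i\le 9$. Consequently the maximal digit is monotone and bounded, so it is eventually equal to some constant $M\le 9$. Once this happens, no run length can exceed $M$: a run $a^{n}$ with $n>a$ contributes the digit $n$, and if we had $n>M$ this would create a digit larger than $M$, contradicting maximality. Thus from some index on, every piece $a_i^{n_i}$ of $S_n$ satisfies $a_i\le M$ and $n_i\le M$.

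The crux, and the step I expect to be the main obstacle, is to bound the length $|S_n|$. The natural decomposition is into \emph{stable} pieces $a^2$ with $2\le a\le M$ (which satisfy $z(a^2)=a^2$, and more generally $z(a^n)=a^2$ whenever $n\le a$) and \emph{active} pieces $a^n$ with $n>a$ (which split as $z(a^n)=n\,a$, emitting a strictly larger digit $n$). Intuitively an active piece behaves like a carry that propagates leftward through the descending staircase and is absorbed upon reaching the stable high-digit block, so the active material cannot accumulate. To make this rigorous I would introduce a monovariant — for instance a suitably weighted count of pieces, or of the ``ascents'' $a_i<\max(a_{i+1},n_{i+1})$ — and show that it is eventually non-increasing, yielding a bound on the number of pieces and hence on $|S_n|$ (the examples suggest the sharp bound is linear in $M$ per connected component). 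Controlling exactly how regrouping merges or creates runs after each application of $Z$ is the delicate part.

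Once $|S_n|$ is bounded, the sequence visits only finitely many integers, so $(S_n)$ is eventually periodic. It remains to bound the period by $9$. For this I would analyse the cyclic states in the regime $a_i,n_i\le M$: I expect them to be concatenations of stable pieces together with short active blocks that evolve essentially independently, each block cycling with period $1$ or $2$ (as in the fixed cycle $\dig{332221}\leftrightarrows\dig{333211}$). The global period then divides the least common multiple of the block periods, and the final task is to verify that no admissible combination of blocks forces a period exceeding $9$; note that the finiteness argument alone gives periodicity but not this sharp bound, so this last structural step is genuinely needed.
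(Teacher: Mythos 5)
Your proposal is a plan rather than a proof: the two steps that carry all the difficulty are left as placeholders. The first is the boundedness of $|S_n|$, which you yourself call the crux; you propose ``a suitably weighted count of pieces, or of the ascents'' as a monovariant, but you neither define it nor show it is non-increasing, and controlling how pieces merge under $Z$ is precisely where all the work lies. The second is the period bound: as you correctly note, pigeonhole on a finite state space gives eventual periodicity with \emph{no} control on the period (a priori it could be as large as the number of reachable states), so the bound $\tau < 9$ rests entirely on your unperformed ``structural analysis of the cyclic states.'' Since both halves of the lemma's content (periodicity via finiteness, and the sharp period) depend on these two unproven steps, the proposal as written does not establish the statement. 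There is also a minor circularity in the opening argument: you invoke $n_i \leq 9$ at every step to conclude that freshly created digits are single digits, but the hypothesis only concerns the seed $s$; this is fixable (one can check that when all runs of $s$ have length at most $9$, each piece maps to a two-digit block and runs of $Z(s)$ have length at most $3$), but it needs to be said.

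For comparison, the paper avoids any global counting argument. It splits digits into kid ($\leq 3$) and adult ($\geq 4$) types and proves: (i) an adult number with runs $\leq 9$ satisfies $Z^3(s) = Z^2(s)$, because after one application all pieces have length $\leq 3$ and after another the number is a concatenation of pieces $a_i^2$ with $a_i \geq \dig{2}$, which is a fixed point (\Cref{lem:fixpt}); (ii) kid numbers with runs $\leq 3$ are handled by an explicit finite case analysis of how $Z$ acts on their pieces (\Cref{fig:smallnumbers}, \Cref{tab:fossils}), giving a cycle within $7$ iterations; (iii) for a general seed with runs $\leq 9$, one application of $Z$ decomposes the number into independent kid and adult components whose run-length representations do not interact, so the bounds from (i) and (ii) apply componentwise. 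If you want to salvage your approach, the decomposition in (iii) is essentially what your ``blocks evolving independently'' would have to become — but then the finiteness/pigeonhole framing adds nothing, since the componentwise analysis already yields both the periodicity and the explicit bound.
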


\begin{definition}[Kid and Adult Types]
We split digits into two sets: digits that are smaller or equal to $3$ are \emph{kid type}, the others are \emph{adult type}. Pieces $a^n$ where $a$ is a kid (resp. adult) are of type kid (resp. adult). Numbers made \emph{only} of type kid (resp. adult) digits are of type kid (resp. adult).
\end{definition}

\begin{lemma}\label{lem:ZtypeB}
$Z$ maps adult type numbers to adult type numbers.
 \end{lemma}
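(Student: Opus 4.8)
The plan is to reduce the claim to a purely digit-wise statement and then verify it one piece at a time. By definition, a number is of adult type exactly when every one of its decimal digits is at least $4$, so to prove that $Z(N)$ is adult whenever $N$ is, it suffices to show that every digit occurring in $Z(N)$ lies in $\{4,\dots,9\}$. First I would unfold the definition of the maxmap: writing $(a_i^{n_i})_i = \rl(N)$, we have
\begin{equation*}
Z(N) = z(a_1^{n_1})\, z(a_2^{n_2}) \cdots z(a_k^{n_k}), \qquad z(a_i^{n_i}) = \max(a_i, n_i)\, a_i,
\end{equation*}
so that the digits of $Z(N)$ are precisely those contributed by the blocks $\max(a_i, n_i)\, a_i$.

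Next I would analyse a single adult piece $a_i^{n_i}$, that is, one with $a_i \geq 4$. The trailing digit of $z(a_i^{n_i})$ is $a_i$ itself, which is adult by hypothesis. The leading part is $\max(a_i, n_i)$, and since $\max(a_i, n_i) \geq a_i \geq 4$ this is again an adult value. The one point needing care is that $\max(a_i, n_i)$ must be a \emph{single} digit for this to genuinely mean \enquote{an adult digit}: this is where I would invoke the standing restriction of this section (justified by the preceding remark) that every run has length at most $9$, so that $n_i \leq 9$ and hence $\max(a_i, n_i) \leq 9$. Under this assumption $z(a_i^{n_i})$ is a two-digit string, both of whose digits lie in $\{4,\dots,9\}$.

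Finally, since $Z(N)$ is merely the concatenation of the blocks $z(a_i^{n_i})$, every digit of $Z(N)$ is a digit of some such block and is therefore adult; hence $Z(N)$ is an adult number. I expect the only genuine obstacle to be the bookkeeping around multi-digit run lengths: if one dropped the $n_i \leq 9$ hypothesis then $\max(a_i, n_i) = n_i$ could carry kid digits (for instance $n_i = 10$ contributes the digits $\dig{1}$ and $\dig{0}$), so the statement is really about the regime in which runs are short, precisely the regime the earlier remark lets us assume. I would also remark that whether or not neighbouring blocks merge into longer runs is irrelevant here, since merging adult digits can only ever produce adult digits, leaving the type unchanged.
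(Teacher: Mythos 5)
Your proof is correct and follows the same piecewise strategy as the paper's: reduce to a single adult piece and check that both parts of $z(a_i^{n_i}) = \max(a_i,n_i)\,a_i$ consist of adult digits. Where you genuinely differ is in making the run-length bound explicit, and this is not mere bookkeeping. The paper's one-line proof disposes of the leading part with the assertion that $\max(n,a)$ is adult whenever either of the two numbers is --- a claim that is only true when $\max(n,a)$ is a single digit. Without the restriction $n_i \leq 9$ the lemma as literally stated actually fails: the adult number $\dig{4}^{10}$ (ten consecutive \dig{4}s, forming one run) is mapped by $Z$ to $\dig{104}$, which contains the kid digits \dig{1} and \dig{0} --- exactly the failure mode you point out. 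So your decision to import the hypothesis of \Cref{lem:periodweak} (all runs of length at most $9$), justified by the remark preceding it, is what makes the argument sound; the paper leaves this entirely implicit, and indeed when it later needs the conclusion in \Cref{thm:typeB} it quietly redoes the piece analysis under the explicit hypothesis $n_i \leq 9$ rather than citing \Cref{lem:ZtypeB}. Your closing observation about merging of adjacent blocks is also worth keeping: it costs one sentence and records why run boundaries are irrelevant for type preservation (the type of a number depends only on which digits occur in it), a point the paper skips here even though such interactions are precisely what it must track later.
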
 
 \begin{proof}[of \Cref{lem:ZtypeB}]
 If $a^n$ is an adult piece, $z(a^n) = \max(n, a)a$ is an adult number, since $a$ is an adult digit by hypothesis and $\max(n, a)$ is adult if either numbers is. The result on $Z$ follows.
\qed
\end{proof}

\begin{lemma}[Fixed points of $Z$, sufficient condition]\label{lem:fixpt}
Let $s$ be an integer, $(a_i^{n_i})_i = \rl(s)$. If $n_i = 2, a_i \geq \dig{2}$ for all $i$ then $Z(s) = s$.
\end{lemma}
\begin{proof}[of \Cref{lem:fixpt}]
We have, for all $i$, $z(a_i^{n_i}) = \max(n_i, a_i)a_i = a_ia_i$. The result on $Z$ follows.
\qed
\end{proof}

\begin{theorem}[Adult seeds give constant sequences]\label{thm:typeB}
Let $s \in \mathbb N$ be adult, and assume that $s$ does not have more than 9 consecutive identical digits, then $Z^3(s) = Z^2(s)$. In particular, the sequence $\lsb(s)$ is constant from the third term on.
\end{theorem}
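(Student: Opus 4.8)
The plan is to show that $Z^2(s)$ already satisfies the sufficient condition of \Cref{lem:fixpt}: that every run of $Z^2(s)$ has length exactly $2$. The accompanying digit condition $a_i \geq \dig{2}$ is automatic, since by \Cref{lem:ZtypeB} the number $Z^2(s)$ is adult and hence all its digits are $\geq \dig{4} \geq \dig{2}$. Once this is established, \Cref{lem:fixpt} gives $Z(Z^2(s)) = Z^2(s)$, i.e. $Z^3(s) = Z^2(s)$, and the assertion that $\lsb(s)$ is constant from the third term follows at once.

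First I would describe $Z(s)$ explicitly, piece by piece. Since $s$ is adult with every run length $n_i \leq 9$, each piece $a_i^{n_i}$ has $a_i \in \{\dig{4}, \ldots, \dig{9}\}$ and $n_i \leq 9$, so $\max(n_i, a_i)$ is a single digit. Thus $z(a_i^{n_i})$ is the two-digit block $a_i a_i$ when $n_i \leq a_i$, and the block $n_i a_i$ (with $n_i > a_i$, hence $n_i \geq \dig{5}$) otherwise. In either case the block ends in $a_i$, and every digit it produces is adult, consistent with \Cref{lem:ZtypeB}.

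The key step, which I expect to be the main obstacle, is to bound the run lengths of $Z(s)$: I would argue that no run of $Z(s)$ exceeds length $3$. Writing $Z(s) = x_1 y_1 x_2 y_2 \cdots x_k y_k$ for the blocks above, so that $y_i = a_i$ always and $x_i \in \{a_i, n_i\}$, the only way two blocks can merge at a boundary is when block $i+1$ starts with $x_{i+1} = n_{i+1} = a_i$. Because consecutive pieces of $s$ satisfy $a_i \neq a_{i+1}$, a run of a fixed digit $d$ can involve $y_i$ and $x_{i+1}$ but never $y_{i+1}$, so it cannot straddle more than one boundary: it receives at most two copies of $d$ from one block (the case $x_i = y_i = a_i = d$) and at most one more from the next, giving length $\leq 3$. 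This merge bookkeeping is the computation I would carry out carefully.

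Finally, since $Z(s)$ is adult with every run $d^\ell$ satisfying $\ell \leq 3 < \dig{4} \leq d$, we have $\max(\ell, d) = d$, so $z(d^\ell) = dd$ for every piece of $Z(s)$. As adjacent pieces of $Z(s)$ carry distinct digits, these two-digit blocks do not merge, and $Z^2(s)$ is exactly a concatenation $d_1 d_1 d_2 d_2 \cdots$ with $d_j \neq d_{j+1}$; hence every run of $Z^2(s)$ has length exactly $2$. Applying \Cref{lem:fixpt} to $Z^2(s)$ then yields $Z^3(s) = Z^2(s)$, completing the proof.
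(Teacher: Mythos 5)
Your proof is correct and follows essentially the same route as the paper's: map each adult piece to a two-digit adult block, observe that boundary merges can make runs of length at most $3$ in $Z(s)$, conclude that $Z^2(s)$ consists of runs of length exactly $2$ with adult digits, and invoke \Cref{lem:fixpt}. Your explicit bookkeeping of which positions can merge ($y_i$ with $x_{i+1} = n_{i+1}$, never reaching $y_{i+1}$) simply spells out the step the paper states tersely as \enquote{two consecutive digits may be identical, but no more.}
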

\begin{proof}[of \Cref{thm:typeB}]
Let $(a_i^{n_i})_i = \rl(s)$. By hypothesis, $a_i$ are adult and $n_i \leq 9$. 
We have
\begin{equation*}
    z(a_i^{n_i}) = 
    \begin{cases}
    a_ia_i & \text{if $n_i \leq a_i$} \\
    n_ia_i & \text{otherwise}
    \end{cases}
\end{equation*}
In the first case, $z(a_i^{n_i})$ is an adult number of length 2, and in the second case $n_i > a_i$ so that we again have an adult of length 2. Applying $Z$ to $s$, it is possible that two consecutive digits (coming from different pieces of $s$) are identical, but no more: as a result, if we write $(b_i^{m_i})_i =  \rl(Z(s))$, then $m_i \leq 3$ and $b_i$ is adult.
Therefore, $Z^2(s)$ satisfies the conditions of \Cref{lem:fixpt}, yielding $Z^3(s) = Z^2(s)$.
\qed
\end{proof}

\begin{remark}
\Cref{thm:typeB} proves \Cref{lem:periodweak}, and therefore \Cref{thm:sam}, for adult integers. We now turn our attention to the remaining case of kid integers.
\end{remark}

\begin{lemma}\label{lem:fixptA}
Let $s \in \mathbb N$ of kid type and assume that no digit appears more than 3 times consecutively in $s$. Then $Z(s)$ is kid and no digit appears more than 3 times consecutively in it.
\end{lemma}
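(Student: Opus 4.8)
The plan is to analyse the digit string of $Z(s)$ directly from the two-digit block structure imposed by $z$, and to bound run lengths using the fact that adjacent pieces of $s$ carry distinct digits. Write $(a_i^{n_i})_i = \rl(s)$; by hypothesis every $a_i \le \dig{3}$ (since $s$ is kid) and every $n_i \le \dig{3}$ (since no digit repeats more than three times in $s$).

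First I would check that $Z(s)$ is kid. Because $n_i \le 3 \le 9$, the number $\max(a_i, n_i)$ is a single digit, so each piece contributes exactly the two digits of $z(a_i^{n_i}) = \max(a_i, n_i)\,a_i$ to $Z(s)$. Both of these digits are at most $\dig{3}$, hence kid; as this holds for every piece, $Z(s)$ is kid.

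For the run-length bound I would set $c_i := \max(a_i, n_i)$ and read $Z(s)$ as the digit sequence $c_1 a_1 c_2 a_2 \cdots c_k a_k$, so that odd positions hold the $c_i$ and even positions hold the $a_i$. The crucial observation is that consecutive pieces of $s$ have distinct digits, that is $a_i \ne a_{i+1}$ for all $i$; in the sequence above this says that the digits occupying positions $2i$ and $2i+2$ always differ. Now any four consecutive positions contain exactly two even positions, which are necessarily of the form $2i$ and $2i+2$; if all four digits were equal we would obtain $a_i = a_{i+1}$, a contradiction. Hence no run in $Z(s)$ can have length $\ge 4$, which is precisely the claim.

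I expect the only delicate point to be making the positional (parity) argument airtight: one must ensure that every piece really contributes exactly two digits, so that the alternation ``odd slot $=c_i$, even slot $=a_i$'' is maintained throughout, and this is guaranteed precisely because $n_i \le 9$. Once this is secured, the bound follows from the single structural fact $a_i \ne a_{i+1}$, and notably neither the kid hypothesis nor any case analysis on the values of the $c_i$ is needed for the run-length part. A run of length exactly three, of the form $c_i a_i c_{i+1}$ with $c_i = a_i = c_{i+1}$, is genuinely attainable, so the bound cannot be lowered to $2$.
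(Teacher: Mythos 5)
Your proof is correct and takes essentially the same route as the paper's: both rest on the facts that each kid piece maps under $z$ to a two-digit kid block (so $Z(s)$ is kid), and that a run of four or more identical digits in $Z(s)$ would force two adjacent pieces of $s$ to share their digit, i.e. $a_i = a_{i+1}$, contradicting the run-length representation. The only differences are cosmetic: the paper verifies the twelve possible pieces by an explicit table where you bound $\max(a_i, n_i) \le 3$ directly, and your parity argument spells out in more detail the paper's terse claim that a long run would require a sub-sequence $\beta^{\alpha}\beta^{\alpha'}$ in $\rl(s)$.
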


\begin{proof}[of \Cref{lem:fixptA}]
Let $(n_i, a_i)_i = \rl(s)$, we can exhaust all possibilities:
\begin{align*}
      \dig{0}^1 & \xrightarrow{z} \dig{10} & \dig{0}^2 & \xrightarrow{z} \dig{20}
    & \dig{0}^3 & \xrightarrow{z} \dig{30}
    & \dig{1}^1 & \xrightarrow{z} \dig{11}
    & \dig{1}^2 & \xrightarrow{z} \dig{21}
    & \dig{1}^3 & \xrightarrow{z} \dig{31} \\
      \dig{2}^1 & \xrightarrow{z} \dig{22}
    & \dig{2}^2 & \xrightarrow{z} \dig{22}
    & \dig{2}^3 & \xrightarrow{z} \dig{32}
    & \dig{3}^1 & \xrightarrow{z} \dig{33}
    & \dig{3}^2 & \xrightarrow{z} \dig{33}
    & \dig{3}^3 & \xrightarrow{z} \dig{33}
\end{align*}
Clearly, $z(a_i^{n_i})$ is kid and of length 2. Furthermore, for $Z$ to contain more than 3 consecutive identical digits, $\rl(s)$ would need to contain a sub-sequence of the form $ \beta^{\alpha}\beta^{\alpha'}$ with $\alpha, \alpha'\in \{1,2,3\}$ and $ \beta \in \{\dig{1},\dig{2},\dig{3}\}$. This is impossible by definition of the run-length representation. \qed 
\end{proof}

\begin{lemma}[Last digit]\label{lem:lastdigit}
Let $s$ be an integer, then the last digit of $Z(s)$ is the same as the last digit of $s$.
\end{lemma}
\begin{proof}
The proof is immediate from the definition of $Z$. \qed 
\end{proof}

\begin{theorem}[Small kid seeds]\label{thm:smalltypeA}
Consider again a kid integer $s$, with no more than three consecutive identical digits, then the sequence $Z^n(s)$ loops after at most $7$ iterations, i.e. $s \in \{Z^m(s) \mid 1 \leq m \leq 8\}$.
\end{theorem}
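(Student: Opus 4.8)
The plan is to collapse the orbit of $s$ onto a short cycle and then to force $s$ to sit \emph{on} that cycle, so that $Z^m(s)=s$. I would start from the two facts already in hand: by \Cref{lem:fixptA} the set $K$ of kid integers all of whose runs have length at most $3$ is stable under $Z$, so the entire orbit stays in $K$; and by \Cref{lem:lastdigit} the final digit never changes. The working tool throughout is the finite transition table of \Cref{lem:fixptA}, which shows that $Z$ acts locally, replacing each piece $a^n$ (with $a,n\le 3$) by the two digits $\max(n,a)\,a$; in particular the length of $Z(x)$ equals twice the number of pieces of $x$.

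First I would establish that the orbit is finite, hence eventually periodic, and bound the number of steps before it starts repeating. The delicate point is that length is not monotone — $\dig{333}\xrightarrow{Z}\dig{33}$ shrinks whereas an alternating seed such as $\dig{21}\dig{21}\cdots$ grows — so a crude length bound fails. I would instead use a per-piece potential measuring how far each piece is from a \emph{mature} form $a^2$ with $a\ge 2$, on which $z$ is the identity (\Cref{lem:fixpt}), weighted to penalise the surviving $\dig{0}$ and $\dig{1}$ pieces that drive the growth. Showing this potential cannot increase, and strictly decreases until every piece is either mature or locked into the oscillation exhibited by $\dig{332221}\leftrightarrows\dig{333211}$, should yield eventual periodicity; calibrating it against the slowest seed, the single digit $\dig{1}$ whose orbit $\dig{1}\to\dig{11}\to\dig{21}\to\dig{2211}\to\dig{2221}\to\dig{3211}\to\dig{332211}\to\dig{332221}$ needs exactly seven steps, gives the bound that the cycle is reached within $7$ iterations.

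To upgrade eventual periodicity to the stated $Z^m(s)=s$, I would show that $s$ carries no transient, i.e. it already lies on its cycle. The clean route is to prove that $Z$ is injective on the recurrent set $R=\bigcap_{k\ge 0}Z^k(K)$: I would first check that a recurrent iterate has average run length exactly $2$ (equivalently its length is twice its number of pieces), then classify these \enquote{balanced} kid patterns and verify that the local rewrite $a^n\mapsto\max(n,a)\,a$ is invertible on them. Injectivity on $R$ removes the transient automatically, since $Z^{p}(s)=Z^{p+q}(s)$ with $p$ minimal and $p\ge 1$ would, after cancelling one application of $Z$, contradict injectivity; hence $p=0$ and $s=Z^{q}(s)$. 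The same classification bounds the period, which I expect to be at most $2$, comfortably within the claimed $m\le 8$.

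The hard part will be this classification of the balanced recurrent patterns together with the injectivity (equivalently, the period-$\le 2$ bijectivity) of $Z$ on them, since this single step both eliminates the transient and caps the period; ruling out period-$3$ and longer cycles, for which $\dig{332221}\leftrightarrows\dig{333211}$ is the cautionary period-$2$ case, is the genuine combinatorial core. I also expect a subtlety precisely at the edge of the hypothesis: $Z$ is \emph{not} injective on all of $K$ — for instance $Z(\dig{2})=Z(\dig{22})=\dig{22}$ — so a short seed genuinely sits on a transient and only joins $R$ after a few steps; it is exactly the seven-step bound of the second paragraph that reconciles this with the statement, ensuring the recurring value is reached among $Z(s),\dots,Z^{8}(s)$.
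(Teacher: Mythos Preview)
Your plan has a real gap at its centre: the per-piece potential is promised but never built, and a potential that is genuinely \emph{per piece} cannot work here. All the growth in these orbits comes from boundary merges between the two-digit outputs of $z$ on adjacent pieces (e.g.\ $\dig{21}\to\dig{2211}\to\dig{2221}\to\dig{3211}\to\dig{332211}$, where at each step the two-digit blocks fuse into longer runs). A potential that sums a cost over pieces is blind to this, and it is precisely this fusing that delays the cycle and creates new $\dig{1}$- and $\dig{0}$-pieces. To make a monotone quantity you would have to encode the boundary patterns, at which point you are essentially doing the finite case analysis anyway. The ``calibration against $\dig{1}$'' is not an argument that $7$ suffices; it only tells you the bound cannot be smaller.

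Your second step, injectivity of $Z$ on the recurrent set $R$, is both unproved and redundant. It is unproved because the ``average run length exactly $2$'' claim does not characterise $R$ (e.g.\ $\dig{32}$ has two pieces and length $2$ yet is transient), so your proposed classification of balanced patterns does not isolate the recurrent ones. It is redundant because, as you yourself note in the last paragraph, seeds like $\dig{1}$ or $\dig{2}$ are genuinely transient, so $s\notin R$ in general and no amount of injectivity on $R$ will give $Z^m(s)=s$; the entire burden falls back on the $7$-step bound, which you have not established. (Incidentally, the literal formula $s\in\{Z^m(s):1\le m\le 8\}$ in the statement is already false for $s=\dig{1}$; what is really being claimed, and what the paper's proof actually argues, is that the orbit enters a cycle within about $8$ iterations.)

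The paper's proof takes the opposite, entirely concrete route: it lists the twelve possible pieces $a^n$ with $a,n\le 3$, observes which two-digit outputs are not already fixed (the ``fossils'' $\dig{10},\dig{20},\dig{30},\dig{21},\dig{31},\dig{32}$), and tabulates the full orbit of each fossil until it hits a period-$1$ or period-$2$ loop. It then writes a general $s$ as $s_0\|p\|s_1$ with $s_0,s_1$ already fixed and $p$ the active fossil block, and does a three-case analysis on the leading digit of $Z(p)$ ($\dig{3}$, $\dig{2}$, or $\dig{1}$) to show that after at most $2$, $2$, or $4$ further iterations a stable $\dig{33}$ or $\dig{22}$ prefix peels off into $s_0$, shrinking $p$. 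Summing these gives the $8$-iteration bound. If you want to rescue your approach, the honest version is to replace the hoped-for potential by exactly this leading-digit case split; that is where the finiteness actually lives.
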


\begin{proof}
\Cref{fig:smallnumbers} shows how $Z$ maps each piece of the integers satisfying the theorem's conditions. We notice that a fixed point is reached in one step for $\dig{2}$, $\dig{22}$, $\dig{3}$, $\dig{33}$, $\dig{333}$, and the only end nodes that are not fixed points are $\dig{10}$, $\dig{20}$, $\dig{30}$, $\dig{21}$, $\dig{32}$, $\dig{31}$ --- these are the ones we shall focus on.
We list the successive mappings of these three nodes up to the point before they enter a loop in \Cref{tab:fossils}, and refer to the list of integers in that table as \emph{fossil integers}.

\begin{figure}[!th]
    \centering 
    \begin{tikzpicture}
\node (1) at (0, 0) {\phantom{\dig{00}}\dig{1}};
\node (11) at (1.5, 0) {\dig{11}};
\node (21) at (3, 0) {\dig{21}};
\node (111) at (0, -1) {\dig{111}};
\node (31) at (3, -1) {\dig{31}};
\node (222) at (0, -2) {\dig{222}};
\node (0) at (0, -3) {\phantom{\dig{00}}\dig{0}};
\node (10) at (3, -3) {\dig{10}};
\node (00) at (5, -3) {\dig{00}};
\node (20) at (8, -3) {\phantom{\dig{0}}\dig{20}};
\node (000) at (2.5, -4) {\dig{000}};
\node (30) at (5.5, -4) {\dig{30}};
\node (32) at (3, -2) {\dig{32}};
\node (2) at (5, 0) {\phantom{\dig{0}}\dig{2}};
\node (22) at (7, 0) {\dig{22}};
\node (3) at (5, -2) {\phantom{\dig{0}}\dig{3}};
\node (33) at (6.5, -2) {\dig{33}};
\node (333) at (8, -2) {\dig{333}};
\draw[>=latex,->] (0) to node[above,midway,scale=0.8]{$Z$} (10);
\draw[>=latex,->] (00) to node[above,midway,scale=0.8]{$Z$} (20);
\draw[>=latex,->] (000) to node[above,midway,scale=0.8]{$Z$} (30);
\draw[>=latex,->] (1) to node[above,midway,scale=0.8]{$Z$} (11);
\draw[>=latex,->] (11) to node[above,midway,scale=0.8]{$Z$} (21);
\draw[>=latex,->] (111) to node[above,midway,scale=0.8]{$Z$} (31);
\draw[>=latex,->] (222) to node[above,midway,scale=0.8]{$Z$} (32);
\draw[>=latex,->] (2) to node[above,midway,scale=0.8]{$Z$} (22);
\draw[] (22) to[in=90,out=25] node[pos=1,right,scale=0.8]{$Z$} (8.5, 0);
\draw[>=latex,->] (8.5, 0) to[out=-90,in=-25] (22);
\draw[>=latex,->] (3) to node[above,midway,scale=0.8]{$Z$} (33);
\draw[>=latex,->] (333) to node[above,midway,scale=0.8]{$Z$} (33);
\draw[] (33) to[out=120,in=-180] node[pos=2,right,midway,scale=0.8]{$~Z$} (6.5, -1);
\draw[>=latex,->] (6.5, -1) to[out=0,in=60] (33);
\end{tikzpicture}
    \caption{Effect of $Z$ on small kid integers.} \label{fig:smallnumbers}
\end{figure}
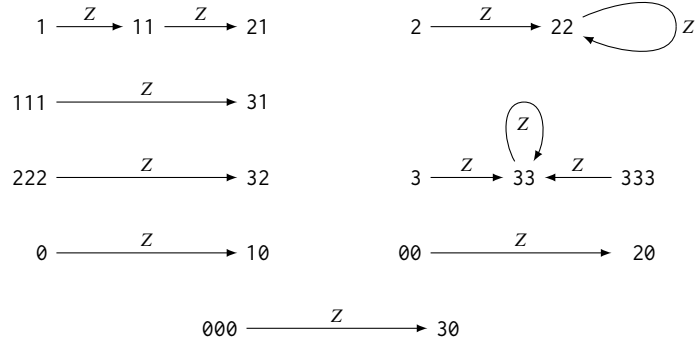
\begin{table}[!th]
\centering 
\caption{Cycles from small kid seeds. We call the numbers appearing here \emph{fossils}.}\label{tab:fossils}
\begin{tabular}{cc cc cc cc cc cc }
&&&&&&&&&& Steps & Period\\\toprule 
$\dig{32}$&$\to$&$\dig{3322}$&\hspace{-0.3cm}$\bft$&&&&&&&1&1\\\midrule
$\dig{31}$&$\to$&\hspace{-0.3cm}$\dig{3311}$&\hspace{-0.7cm}$\to$&\hspace{-1.2cm}$\dig{3321}$&$\hspace{-1.7cm}\to$&\hspace{-2.2cm}$\dig{332211}$ &&&&&\\
&&&&&&\hspace{-2.2cm}$\downarrow$&&&&&\\
&&&&&&\hspace{-2.2cm}$\dig{332221}$&$\hspace{-2.6cm}\leftrightarrows$&$\hspace{-1.4cm}\dig{333211}$&&5&2\\
&&&&&&\hspace{-2.2cm}$\uparrow$&&&&&\\
$\dig{21}$&$\to$&\hspace{-0.3cm}$\dig{2211}$&$\hspace{-0.7cm}\to$&\hspace{-1.2cm}$\dig{2221}$&$\hspace{-1.7cm}\to$&\hspace{-2.2cm}$\dig{3211}$   &&&&& \\\midrule
$\dig{00}$&&$\dig{221110}$&$\to$&\hspace{-0.16cm}$\dig{223110}$&&$\dig{2233322110}$&&&&&\\
$\downarrow$&&$\uparrow$&&$\downarrow$&&$\downuparrows$&&&&7&2\\
$\dig{20}$&$\to$&$\dig{2210}$&&\hspace{-0.16cm}$\dig{22332110}$&$\to$&$\dig{2233222110}$&&&&&\\\midrule
$\dig{0}$    &       & $\dig{3110}$ & $\to$ & \hspace{-0.16cm}$\dig{332110}$ &&&&&&& \\ 
$\downarrow$&&$\uparrow$&&$\downarrow$&&&&&&6&2\\
$\dig{10}$   & $\to$ & $\dig{1110}$ &       & \hspace{-0.16cm}$\dig{33222110}$ & $\leftrightarrows$ & \hspace{-0.3cm}$\dig{33322110}$ &&&&& \\\midrule 
$\dig{000}$&       & $\dig{331110}$ & $\to$& \hspace{-0.16cm}$\dig{333110}$ &       &\hspace{-0.3cm}$\dig{33222110}$\\
$\downarrow$&&$\uparrow$&&$\downarrow$&&$\downuparrows$&&&&7&2\\
$\dig{30}$ & $\to$ & $\dig{3310}$   &      &  \hspace{-0.16cm}$\dig{332110}$ & $\to$&\hspace{-0.3cm}$\dig{33222110}$ 
\\
\bottomrule 
\end{tabular}
\end{table}

Assume that $s = s_0 \| p \| s_1$, with $s_0, p, s_1$ integers such that $Z(s_i) = s_i$, and such that $s_0$ does not end with $p$'s first digit and $s_1$ does not start with $p$'s last digit. If $p$ is not fossil then $Z(p) = p$ and therefore $Z(s) = s$. Otherwise, 
\begin{itemize}
    \item For $Z(p)$ to start with $\dig{3}$, we must have $p$ beginning with $\dig{3}b$, $\dig{33}b$, with $b \neq\dig{3}$, or three consecutive digits $aaa$. Then $Z(p)$ starts with $\dig{3}a$ and $Z^2(p)$ starts with $\dig{33}$ which is not fossil. By \Cref{lem:lastdigit}, the first digit of $Z(s_1)$ and the last digit of $Z(p)$ differ. As a result after two iterations of $Z$ we are in a situation $s' = s_0'\|p'\|s_1$ where 
    $s_0' = s_0\|\dig{33}$ and $p'$ does not begin with $\dig{3}$ nor three consecutive digits.
    \item For $Z(p)$ to start with $\dig{2}$, we must have $p$ beginning with $\dig{2}b$ with $b < \dig{2}$, or two consecutive digits $aa$ with $a \leq \dig{2}$. Then $Z(p)$ starts with $\dig{22}$ or $\dig{2}a$ respectively, and $Z^2(p)$ starts with $\dig{22}$. As above, after two iterations of $Z$ we are in a situation $s' = s_0'\|p'\|s_1$ where 
    $s_0' = s_0\|\dig{22}$ and $p'$ does not begin with $\dig{2}$ nor two consecutive digits.
    \item For $Z(p)$ to start with $\dig{1}$, we must have $p$ beginning with $ab$ with $a \leq \dig{1}$ and $b > \dig{1}$. Then $Z(p)$ starts with $\dig{10}$ or $\dig{11}$, and $Z^2(p)$ starts with $\dig{11}$ or $\dig{21}$ respectively. Then $Z^4(p)$ or $Z^3(p)$ respectively starts with $\dig{22}$. Thus after three or four iterations of $Z$ we are in a situation $s' = s_0'\|p'\|s_1$ where 
    $s_0' = s_0\|\dig{22}$ and $p'$ does not begin with an isolated digit.
\end{itemize}
Since $Z(p)$ cannot start with $\dig{0}$, these are the only cases. Therefore, after at most $4 + 2 + 2 = 8$ iterations there remain no possibility for $p$. In other terms $Z^m(s) = s$ for $m \leq 8$.

\qed
\end{proof}
We are now ready to complete the proof:
\begin{proof}[of \cref{lem:periodweak,thm:sam}]
We have proven the result for adult seeds (\Cref{thm:typeB}) and for certain kid seeds (\Cref{thm:smalltypeA}). 

Let $s$ be an integer, $(a_i^{n_i})_i = \rl(s)$. We partition the pieces $(a_i^{n_i})_i$ into kid type and adult types. The effect of $Z$ on kid pieces is the one that needs attention: either $n_i \leq 3$, or $n_i > 3$. In the second case, the effect of $Z$ is to introduce the adult digit $n_i$. Consider the type of pieces $(i-1)$ and $(i+1)$:
\begin{itemize}
    \item If piece $(i-1)$ is adult: then $n_i$ will fuse with the result of $z(a_{i-1}^{n_{i-1}})$ into a adult piece;
    \item Otherwise $n_i$ will be its own new adult piece, inserted between the result of $z$ on piece $(i-1)$ and the $a_i$. 
\end{itemize}
As a result, after applying $Z$ to $s$, we can partition $Z(s)$ as a concatenation of kid integers having no more than 3 consecutive identical digits, and adult integers. These correspond in particular to disjoint run-length representations, so that $Z$ acts on them independently.

We have shown that the action of $Z$ on such integers preserves their properties, and that $Z$ reaches a cycle in at most 7 iterations. Accounting for the extra application of $Z$ at the beginning of this proof, we have the claimed result.
\qed 
\end{proof}

\begin{remark}
\Cref{thm:sam} shows that we eventually reach a cycle; \Cref{lem:periodweak} shows that for certain integers (which do not repeat any digit more than 9 times in a row) this cycle is reached quickly. Nevertheless, some integers that \emph{do not} satisfy the conditions of \Cref{lem:periodweak} can still reach a cycle quickly: for instance, $\dig{2}^{22} \to \dig{222} \to \dig{3322}$ (two iterations). 
This brings the following question: what is the smallest integer $y$ so that $\lsb(y)$ takes \emph{more} than $8$ iterations to reach a cycle?  
\end{remark}

\begin{conjecture}\label{conj}
The smallest integer $y$ so that $\lsb(y)$ takes $9$ iterations to reach a cycle is $y = \dig{2}^x$ with $x = \dig{3}^{11}$. If we now (temporarily) use the standard multiplicative notation $a^b = \underbrace{a \times a \times \cdots \times a}_\text{$b$ times}$, then
\begin{equation*}
    y = \frac{2}{9}\left(10^{\left(10^{11}-1\right)/3}-1\right) \simeq 10^{10^{10.52}}.
\end{equation*}
\end{conjecture}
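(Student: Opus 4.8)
The plan is to separate the statement into an \emph{achievability} claim (this particular $y$ does take nine iterations) and a \emph{minimality} claim (no smaller integer does). Achievability is a finite computation, which I would record as a short lemma. Writing $x=\dig{3}^{11}$ for the repdigit $\dig{33333333333}$, the first step digests the single run: since $x>\dig{2}$ we have $z(\dig{2}^{x})=x\|\dig{2}$, i.e.\ eleven $\dig{3}$'s followed by a $\dig{2}$. The next step digests the new run, $z(\dig{3}^{11})=\dig{11}\|\dig{3}=\dig{113}$, which injects the sub-pattern $\dig{11}$ that then drives the evolution; iterating $Z$ seven more times one checks that $Z^{9}(y)=Z^{7}(y)$ and that this is the first coincidence, so the orbit first closes up after nine iterations. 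The computation is mechanical, and I would simply tabulate the ten iterates.

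For minimality I would build on the block decomposition from the proof of \Cref{thm:sam}: after one application of $Z$, any integer splits into kid blocks (runs of length at most $3$) and adult blocks, on which $Z$ acts independently. By \Cref{thm:typeB} and \Cref{thm:smalltypeA}, every block containing no run longer than $9$ reaches its cycle within $8$ iterations; hence an integer needing a ninth iteration must contain a run of length at least $10$, and the surplus can only come from digesting that run. The mechanism is the identity $z(a^{n})=D(n)\|a$ (valid whenever $n>a$), where $D(n)$ is the decimal string of $n$: a long run collapses in one step to the digit-string of its own length, so slowness can only be produced by that digit-string being itself a long run, recursively. I would make this precise through a \emph{delay function} $\delta(n)$ counting the extra iterations a run of length $n$ contributes before its orbit synchronises with the baseline kid cascade, express $\delta(n)$ recursively in terms of $\delta$ evaluated on $D(n)$, and establish enough monotonicity to compare runs of different lengths and digits.

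With $\delta$ available, the conjecture becomes an optimisation: among all integers, isolate the one of least value whose total count reaches $9$. Since the value of an integer is governed by its digit count, which is essentially the length of its longest run, this reduces to finding the shortest run length $n$ with $\delta(n)$ large enough and then choosing the run digit and remaining digits to minimise the integer. The arithmetic of $D(n)$ is what selects $n=11$: the string $\dig{11}$ is a clean run of the kid digit $\dig{1}$, whereas $\dig{10}$ injects a $\dig{0}$ and single-digit lengths do not branch, so $11$ is the least length exceeding $9$ whose decimal expansion is a run of a kid digit. This is what pins the innermost count to $11$ and the construction to the two-level tower $\dig{2}^{(\dig{3}^{11})}$.

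The hard part --- and the reason the statement is only conjectured --- is making this optimisation airtight. Two difficulties stand out. First, $\delta$ is not additive: after digestion the freshly created blocks are adjacent and may merge, so the recursion must track boundary digits carefully, iterating the same merging bookkeeping that the earlier proofs handled one step at a time. Second, minimality must be proved against \emph{all} smaller integers, including mixed seeds carrying several moderately long runs whose delays interact, rather than a single clean tower; bounding the combined delay of such shapes uniformly, without enumerating infinitely many of them, is the step I expect to absorb most of the work.
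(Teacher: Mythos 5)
First, a point of order: the paper does \emph{not} prove this statement. It is stated as a conjecture, supported only by the displayed 9-step orbit of $y=\dig{2}^x$, and its resolution is explicitly listed among the open questions. So there is no proof to compare yours against, and your split into an achievability half (a finite tabulation, which is all the paper itself provides) and a minimality half (the genuinely open part) is the right framing. Your achievability computation is correct: $Z(y)=x\|\dig{2}$, $Z^2(y)=\dig{11322}$, and seven more iterations give $Z^9(y)=Z^7(y)$ as the first coincidence, a tail of length $7$ into a $2$-cycle. (Incidentally, your value $\dig{11322}$ exposes a typo in the paper's displayed orbit, which shows $\dig{11312}$; since $z(\dig{2}^1)=\max(1,2)\|\dig{2}=\dig{22}$, your computation is the one consistent with the definitions, though both versions have the same tail-7, period-2 profile.)

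The genuine gap is in the minimality half, and it is worse than the ``hard optimisation'' you defer: the conjecture appears to be \emph{false}, and the mechanical method you propose for achievability detects this immediately. By your own (correct) reduction via \Cref{lem:periodweak}, any integer needing a ninth iteration must contain a run of length at least $10$; the smallest integer containing such a run is $\dig{1}^{10}=\dig{1111111111}$. Tabulating its orbit:
\begin{align*}
\dig{1}^{10} &\to \dig{101} \to \dig{111011} \to \dig{311021} \to \dig{3321102211} \to \dig{332221102221}\\
&\to \dig{333221103211} \to \dig{33222110332221} \leftrightarrows \dig{33322110333211},
\end{align*}
that is, nine pairwise-distinct values $Z^0,\dotsc,Z^8$ with $Z^9=Z^7$: exactly the same convergence profile as the conjectured $y$, under any counting convention (tail length $7$ for both, first repetition at step $9$ for both). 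Since $\dig{1}^{10}$ is astronomically smaller than $\dig{2}^{(\dig{3}^{11})}$, minimality fails at the very first candidate below the conjectured minimum. The flaw sits precisely in your heuristic for ``pinning the innermost count to $11$'': you dismiss runs of length $10$ on the grounds that ``$\dig{10}$ injects a $\dig{0}$,'' but the injected $\dig{0}$ is never absorbed --- it survives as a permanent separator (compare the $\dig{0}$-tails in \Cref{tab:fossils}) and \emph{prolongs} the cascade rather than shortening it. Consequently any delay function $\delta$ built to confirm the conjecture would need $\delta(10)$ to be small, which the computation above contradicts. Before investing in the $\delta$-machinery and the merging bookkeeping, the productive step is to settle this small case; as it stands, your proposal sketches a programme whose target statement your own tools refute.
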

The convergence process in 9 steps is:

\begin{center}
{\small
\begin{tabular}{r}
$y=\dig{2}^x 
    \to x\dig{2} \to \dig{11312} \to \dig{21331122} \to \dig{2211332122} \to \dig{222133221122} 
    \to \dig{321133222122} \to \dig{33222133321122}$\\
    $\downuparrows$~~~~~~~~~~~~~\\
    $\dig{33321133222122}$
\end{tabular}}
\end{center}

\section{Open questions}
Although every $\lsb$ sequence cycles, it remains an open question to fully enumerate and classify the limit cycles. It also remains open to extend Conjecture~\ref{conj} and determine for every $n > 0$ the values $\sigma_n$ defined as the smallest positive integer such that $\lsb(\sigma_n)$ reaches a cycle in exactly $n$ iterations.

\bibliographystyle{alpha}
\bibliography{conway.bib}
\end{document}